\newcommand{\algorithmicbreak}{\textbf{break}}
\newcommand{\BREAK}{\STATE \algorithmicbreak}
\newglossaryentry{fn}{type=symbols,name={\ensuremath{F_n}},sort=fn,
description={Empirical (sample) distribution function}}
\newglossaryentry{fncon}{type=symbols,name={\ensuremath{F^{n^\ast}}},sort=fnc,description={$n$-fold convolution of the distribution function/distribution $F$}}
\crefname{hypothesis}{Hypothesis}{Hypotheses}
\title{Online Learning and Matching for Resource Allocation Problems\thanks{Submitted to the editors November 17, 2019. Completed under the guidance of Anna Ma, Department of Mathematics, University of California, Irvine (\email{anna.ma@uci.edu}) and Xinshang Wang, DAMO Academy, Alibaba US (\email{xinshang.w@alibaba-inc.com}).}}
\author{Andrea Boskovic\thanks{Department of Statistics, Amherst College, Amherst, MA 01002} (\email{aboskovic21@amherst.edu}). 
\and Qinyi Chen\thanks{Department of Mathematics, University of California, Los Angeles, Los Angeles, CA 90095} (\email{qinyichen@ucla.edu}). 
\and Dominik Kufel\thanks{Department of Physics, University College London, Gower St, Bloomsbury, London WC1E 6BT, United Kingdom} (\email{dominic.kufel@gmail.com}).
\and Zijie Zhou\thanks{Department of Mathematics, Purdue University, 610 Purdue Mall, West Lafayette, IN 47907} (\email{zhou759@purdue.edu}).
}
\newcommand*{\addFileDependency}[1]{
  \typeout{(#1)}
  \@addtofilelist{#1}
  \IfFileExists{#1}{}{\typeout{No file #1.}}
}
\newcommand*{\myexternaldocument}[1]{%
    \externaldocument{#1}%
    \addFileDependency{#1.tex}%
    \addFileDependency{#1.aux}%
}
\begin{document}

\maketitle

\begin{abstract}
In order for an e-commerce platform to maximize its revenue, it must recommend customers items they are most likely to purchase. However, the company often has business constraints on these items, such as the number of each item in stock. In this work, our goal is to recommend items to users as they arrive on a webpage sequentially, in an online manner, in order to maximize reward for a company, but also satisfy budget constraints. We first approach the simpler online problem in which the customers arrive as a stationary Poisson process, and present an integrated algorithm that performs online optimization and online learning together. We then make the model more complicated but more realistic, treating the arrival processes as non-stationary Poisson processes. To deal with heterogeneous customer arrivals, we propose a time segmentation algorithm that converts a non-stationary problem into a series of stationary problems. Experiments conducted on large-scale synthetic data demonstrate the effectiveness and efficiency of our proposed approaches on solving constrained resource allocation problems. 
\end{abstract}

\begin{keywords}
    online algorithms, resource allocation, traffic shaping, reinforcement learning, online convex optimization, non-stationary arrivals  
\end{keywords}

\begin{AMS}
90B05, 90B50, 90B60, 90C05 
\end{AMS}

\section{Introduction}
\label{sec:Introduction}
Resource allocation has been considered an important task by many e-commerce platforms, and it can essentially be formulated as a generalized online matching problem. In an electronic marketplace, products are placed for sale on a webpage as customers arrive sequentially, viewing the products and making purchase decisions. As each customer arrives, the platform needs to display corresponding items that they are likely to purchase. However, the tendency of each customer to purchase a certain product is unknown, and the revenue generated by the sale of different products varies. In a given session, we assume that customers arrive onto the webpage randomly over time. The task at hand is to find a way to match each customer to an item such that this matching maximizes the reward (i.e., the potential revenue generated through the sale of items) with respect to certain constraints, such as the stock of each item.

Resource allocation problems can be approached in either an offline or an online manner. The offline problem assumes that the sequence of customer arrivals is known in advance, while in the online problem, we consider customers arriving onto the webpage as following an unknown stochastic process. Oftentimes, customers arriving in an online manner are modeled as a stationary Poisson process, the rate of which is unknown beforehand. The offline algorithm optimizes multiple functions simultaneously \cite{agrawal2014dynamic}, whereas the online problem involves optimizing different sequences of functions at each time.  Although the offline problem is a less realistic problem, the optimal solution to the offline problem is necessary for the evaluation of regret in the online problem. Current work on the online problem \cite{agrawal2014bandits, agrawal2014fast, badanidiyuru2013bandits, cheung2018inventory} mainly focuses on its theoretical aspect and attempts to minimize the regret, which is a measure of how well the online algorithm is working in comparison to the offline algorithm and its optimal solution. Moreover, resource allocation problems are closely related to ad allocation, which is also studied in the context of online matching problems. Some notable examples include \textit{DisplayAds} \cite{feldman2009online} and \textit{AdWords} \cite{devanur2009adwords, mehta2013online}. 

In this paper, we propose several online algorithms for allocating products to users, which extend and improve previous work. We first approach the online stationary problem, in which the sequence of customer arrivals is unknown, but the arrival rates of customers are constant over time, by introducing an integrated algorithm that performs online learning and matching together. We then proceed to the non-stationary case, in which customer arrival rates vary over time, and propose another time segmentation algorithm that tackles the customer heterogeneity. We theoretically verify the convergence of average regret in our algorithms, and experimentally demonstrate their efficacy in providing near-optimal product recommendations.

The rest of the paper is organized as follows. \Cref{sec:LiteratureReview} discusses the background of the offline and online resource allocation problems, and some existing approaches that we rely upon. \Cref{sec:IntegratedAlgorithm} introduces an integrated algorithm that tackles the online problem with stationary customer arrivals. \Cref{sec:OnlineNonstationaryProblem} extends the problem to consider heterogeneous customer arrivals, and propose another algorithm that approximates a non-stationary problem into a series of stationary problems. \Cref{sec:Experiment} demonstrates experimentally the effectiveness of our proposed algorithms. \Cref{sec:Conclusion} and \Cref{sec:FutureWork} interpret the results of our work and propose future directions.

\section{Background}
\label{sec:LiteratureReview}
We first review some existing approaches for the offline and online matching problems that we build upon to design our online algorithms as well as outline the framework of each approach.

\subsection{Offline Problem}
\label{subsec:offline}
In the offline matching problem, we assume that the distribution of customer arrivals and the preference of customers are both known. Therefore, we can simply optimize the potential revenue by solving the following linear program. Here, $j$ indexes the customers, where the total number of customers is $m$, and $i$ indexes the items, where the total number of items is $n$. Further, $r_i$ refers to the reward, or revenue, for the company when a particular customer purchases item $i$, $P_{ij}$ is the customer preference matrix, which contains the probability of customer $j$ purchasing item $i$ given they were offered item $i$, and $\Bar{P_j}$ is the maximum value of $P_{ij}$ for each customer, i.e., $\Bar{P_j} = \max_i P_{ij}$. Additionally, $x_{ij}$ refers to the probability that customer $j$ is recommended item $i$. A summary of notation can be seen in \Cref{app:Notations}.
\begin{equation} 
\label{eqn:lp} 
\max_{\substack{x_{ij} \\ i \in [n] \\ j \in [m]}}\sum_{i \in [n]}\sum_{j \in [m]} r_iP_{ij}x_{ij} - \mu\sum_{j \in [m]}\Bar{P_j}\sum_{i \in [n]}x_{ij}\log{x_{ij}},
\end{equation}

\begin{equation*}
\begin{aligned}
\mathrm{s.t.} & \sum_{j \in [m]}P_{ij}x_{ij} \leq b_{i}, \forall i \in [n]; \\
& \sum_{i \in [n]}x_{ij} = 1, \forall j \in [m]; \\
& x_{ij} \geq 0, \forall i \in [n], \forall j \in [m].
\end{aligned}
\end{equation*}
The maximization problem in the primal form can also be solved as a minimization problem in the dual form in the following way: 
\begin{equation} \label{eqn:dual}
f(\Lambda):=\mu \sum_{j \in [m]}\Bar{P_j} \log Z_j + \langle \Lambda,b \rangle,
\end{equation}
where $Z_j = \sum_{i \in [n]}\exp{(\frac{(r_i-\Lambda_i)P_{ij}}{\Bar{P_j}\mu})}$ is a normalization factor, and $\mu$ accounts for regularization \cite{zhong2015stock}, ensuring that our linear program is strongly convex and therefore has only one optimal solution. Given optimal $\Lambda$, the solution for $x$ is then:
\begin{equation}\label{eqn:primal}
x_{ij} = \frac{1}{Z_j}\exp{\frac{(r_i-\Lambda_i)P_{ij}}{\Bar{P_j}\mu}}.
\end{equation}

The primal formulation \eqref{eqn:lp} can be converted to its dual form \eqref{eqn:dual} by means of Lagrangian duality. This is a well-studied topic in optimization, and more details can be found in \cite{hazan2016intro}. To obtain the optimal solution to the offline matching problem, various first-order optimization algorithms can be applied, such as gradient descent (GD) and stochastic gradient descent (SGD). In this work, the objective function is minimized via GD with lingering radius ($\mathrm{GD}^{\mathrm{lin}}$), a less computationally expensive, state-of-the-art method \cite{allen2018lingering} well-suited for solving resource allocation problems.

\subsection{Online Stationary Problem}

The goal of an online matching algorithm is to recommend products to customers as they arrive sequentially onto the webpage in a way that not only maximizes reward, but also satisfies budget constraints. One difficulty of the online problem is that as each customer arrives, their preference for any particular item is unknown and must be learned in real time. To obtain a prediction of the customer preferences in advance, e-commerce platforms often divide the customers into different types, according to their demographics or other information. In the most simplified online problems, each type of customer is assumed to arrive as a stationary Poisson process. To learn their preferences $P_{ij}$, which correspond to the likelihood that a customer from type $j$ buys item $i$, we apply reinforcement learning techniques. By utilizing knowledge about the purchases of previous customers, we make product allocation decisions for future arriving customers. 

One commonly used technique to take the best possible action to maximize reward, or to determine the best product to recommend to each customer type, is the Upper Confidence Bound (UCB) algorithm \cite{Auer2003UCB}. The UCB algorithm is considered ideal for our purposes mainly because it is not greedy, i.e., it does not always recommend an item to a specific customer type if that item maximizes reward at a particular time. The algorithm exemplifies the principle of optimism in face of uncertainty, recommending items to each customer type until it exceeds some upper bound of certainty of the expected reward of that item's recommendation. This property allows us to obtain an accurate estimate of $P_{ij}$ fairly early on, thus enabling us to achieve a more accurate solution to the optimization problem. This approach is discussed in more detail in \Cref{sec:IntegratedAlgorithm}.

In addition to using the UCB algorithm to recommend products to users, we use online gradient descent, an online convex optimization method, to compute the gradient of the objective function of each arriving customer, which is then used to update the value of our dual variable $\Lambda$ \cite{zinkevich2003online}. The online convex optimization component of the algorithm is crucial in measuring the performance of the online integrated algorithm. Specifically, we seek to minimize regret, which is defined as follows: 
\begin{equation}
\label{eqn:regret}
\begin{split}
    \min_{\Lambda_{t}, t\in [T]} \mathrm{regret}_T = \sum_{t=1}^T \min(f_t(\Lambda_t, P^{(t)})) - \sum_{t=1}^T f_t(\Lambda^\ast, P^\ast).
\end{split}
\end{equation}
The regret function essentially compares the online problem for each arriving customer $t$ to the optimal solution to the offline problem, where we know the sequence of functions $\{f_1,f_2,...,f_T\}$ in advance \cite{shalev2012online}. In other words, regret acts as a metric that uses the offline problem as a benchmark for the online problem. The goal in solving the online problem is to minimize this regret function, thus minimizing the loss incurred due to error in optimization. 

\subsection{Online Non-stationary Problem}

In the online non-stationary problem, we consider a more realistic case: different types of customer arrive as non-stationary Poisson processes, in which their arrival rates are functions of time. As in the integrated algorithm, we consider the regret of the online non-stationary algorithm, defined in \Cref{eqn:regret}, and we again aim to minimize this regret function. Although minimal literature exists on problems with non-stationary stochastic customer arrivals, \cite{Stein2018AdvanceSR} discusses a non-stationary stochastic demand problem.

\section{Online Integrated Algorithm}
\label{sec:IntegratedAlgorithm}

We now consider customer arrivals onto a webpage in an online, or sequential, manner. Additionally, we assume no previous knowledge of customer preferences $P_{ij}$, and learn this value as customers arrive. The customers are assumed to arrive following a stationary Poisson process, where the Poisson arrival rates are known. In this section, we describe the formulation of the online stationary problem, and introduce an online integrated algorithm that combines the Upper Confidence Bound (UCB) algorithm, which learns customer preferences, with Online Gradient Descent (online GD), which tackles the optimization component of the problem. By performing online learning and optimization together, the integrated algorithm thus allows us to recommend the optimal product to each customer, and study their purchasing behaviors at the same time.

\subsection{Mathematical Formulation}
In the online stationary problem, we assume that a total of $T$ customers arrive over the entire time period. The customers arrive in a sequential manner, and when the $t^{th}$ customer arrives, the only information we have is the information about the previous customers. Our objective is to maximize the total expected reward for all customers by maximizing reward for any given $t^{th}$ customer, where $t \in [T]$. Therefore, for a particular customer $t$, we wish to solve the following maximization problem:
\begin{equation}
\begin{split}
\label{eqn:online_lp_1}
    \max_{x_{it}} 
    \sum_{i=1}^{n}  r_i P_{it} x_{it},
\end{split}
\end{equation}

\begin{equation*}
\begin{aligned}
\mathrm{s.t. } & \sum_{i=1}^{n} x_{ij} = 1 \ \ \forall{j \in [t]}, \ x_{ij} \geq 0; \\
& \sum_{j=1}^{t}P_{ij} x_{ij} \leq b_i \ \ \forall{i \in [n]}.
\end{aligned}
\end{equation*}

The last constraint here is based not only on the current customer but also on all the customers that have previously arrived. For simplicity, we have left out the regularization term in this formulation.

Problem \eqref{eqn:online_lp_1} above can be converted into the following dual problem:
\begin{equation}
\begin{split}
    \min_{\Lambda}f_t(\Lambda, P) = \min_{\Lambda} \left(\mu \overline{P_t} \log{Z_t} + \langle \Lambda,b \rangle - \sum_{i=1}^{n} \Lambda_i \sum_{j=1}^{t-1} P_{ij} y_{ij}\right).
\end{split}
\label{eq:solvefun}
\end{equation}
In the online setting, we seek to minimize \eqref{eq:solvefun}. In order to evaluate the performance of our online algorithm, we first define the regret function \cite{zinkevich2003online}, which is obtained comparing our online dual objective against the optimal dual objective obtained in the corresponding offline problem:
\begin{definition}\label{reg}
Given an online algorithm and online minimization problem \eqref{eq:solvefun}, the regret of the algorithm at time $T$ is:
$$\mathrm{regret}_T = \sum_{t=1}^T f_t(\Lambda_t, P^{(t)}) - \sum_{t=1}^T f_t(\Lambda^\ast, P^\ast),$$
where $\Lambda^\ast$ denotes the optimal dual variable in the offline problem and $P^\ast$ is the underlying ground truth customer preference matrix. At each iteration, we obtain the preference matrix $P^{(t)}$ and dual variable $\Lambda_t$. Note that here $\sum_{t=1}^T f_t(\Lambda^\ast, P^\ast)$ is simply the optimal offline dual for the $t^{\mathrm{th}}$ customer, which matches \eqref{eqn:dual}. Additionally, we define the average regret to be $\frac{\mathrm{regret}_T}{T}$.
\end{definition}
Our goal of solving the online problem is to minimize this regret function, thus minimizing the loss incurred due to error in optimization. Note that
\begin{equation*} 
\begin{split}
    \min_{\Lambda_{t}, t\in [T]} \mathrm{regret}_T = \sum_{t=1}^T \min(f_t(\Lambda_t, P^{(t)})) - \sum_{t=1}^T f_t(\Lambda^\ast, P^\ast) ,
\end{split}
\end{equation*}
i.e., minimizing the regret does not change the second term because the values in the summation are fixed. 

As mentioned, it is oftentimes too computationally expensive to learn the purchasing behavior of every single customer and minimize the dual variable for each of them. We therefore group the customers into different types based on their demographics---as e-commerce platforms tend to do in practice---since customers from the same background tend to display similar shopping behaviors. We assume that there are $m$ types of customers, and the customer preferences in each type are i.i.d. We let the preference matrix $P_{ij}$  represent the probability that any customer of type $j$ buys item $i$, instead of the preference of a single customer. Additionally, we assume the customers of type $j$ arrive as a stationary Poisson process of rate $\lambda_j$. Therefore by the superposition property of Poisson processes, we know that the probability that the customer arrival is of type $j$ is $\frac{\lambda_j}{\sum_{s=1}^m \lambda_s}$.

To reflect the changes in our model, we also make modifications to \eqref{eqn:online_lp_1}. The primal objective for the $t^{th}$ customer is now:
\begin{equation}
\label{eqn:online_lp_2}
    \max_{x_{ij}} \sum_{i=1}^n \sum_{j=1}^m r_iP_{ij}x_{ij}\frac{\lambda_j}{\sum_{s=1}^m \lambda_s},
\end{equation}

\begin{equation*}
\begin{aligned}
   \mathrm{s.t. } & \sum_{j=1}^m \frac{\lambda_j}{\sum_{s=1}^m \lambda_s}P_{ij}x_{ij} \leq \frac{b_i}{T}, \ \ \forall{i \in [n]}; \\
   & \sum_{i=1}^n x_{ij}=1, \ \  \forall{j \in [m]}; \\
   & x_{ij} \geq 0, \ \ \forall{i \in [n], \ j \in [m]}.
\end{aligned}
\end{equation*}
Note that \eqref{eqn:online_lp_2} now reflects the expected revenue we would obtain from the $t^{th}$ customer arrival. As before, we can convert it to the following dual problem: 
\begin{equation}
\label{eqn:online_lp_3}
    \min_{\Lambda}f_t(\Lambda, P)= \min_{\Lambda} \left(\mu \sum_{j=1}^m\frac{\lambda_j}{\sum_{s=1}^m \lambda_s} \overline{P_j} \log(Z_j) + \frac{1}{T} \langle \Lambda,b \rangle \right),
\end{equation}
where $Z_j=\sum_{i=1}^n \exp(\frac{(r_i-\Lambda_i)P_{ij}}{\mu \overline{P_j}})$. Note that while $f_t$ denotes the objective function related to the $t^{th}$ customer, $f_t$ does not depend on $t$. We can again obtain $x_{ij}$ by applying \eqref{eqn:primal}.

Another issue that one needs to take note is that customer preference $P$ is initially unknown. Now, not only do we need to solve the online stationary problem, we also need to gradually learn $P$ and to keep updating it as customers arrive. Therefore, when solving the minimization problem in \eqref{eqn:online_lp_3}, the variable $P_{ij}$ will change as customers continue arriving. In the following section, we describe an integrated algorithm that allows us to learn $P$ and solve the optimization problem simultaneously.

\subsection{Upper Confidence Bound (UCB) Algorithm}
In \Cref{alg:ucb}, we introduce the UCB algorithm \cite{lattimore2018bandit} used as part of our integrated algorithm. Here, we let $D \in \{0,1\}^{n \times T}$ denote a binary reward matrix, where each entry $D_{it}$ denotes whether or not the $t^{\mathrm{th}}$ customer buys item $i$. By the time of the $t^{\mathrm{th}}$ customer arrival, we let $N_i(t)$ denote the number of times item $i$ has been selected and $R_i(t)$ be the amount of rewards we have already collected by assigning item $i$. The average reward is denoted as $\overline{r}_i(t)=R_i(t)/N_i(t)$. 
We define our UCB function as follows:
\begin{equation*}
\mathrm{UCB}_i(t-1) =
\begin{cases} 
      \infty & N_i(t-1) = 0 \\
      \overline{r}_i(t-1) + \sqrt{\frac{3\log(t)}{2N_i(t-1)}} & \mathrm{otherwise.}
   \end{cases}
\end{equation*}
Note that the definition of the upper confidence bound can in fact be changed depending on how much importance we place on the exploration component.

\begin{algorithm}
\caption{Upper Confidence Bound (UCB) Algorithm}
\hspace*{\algorithmicindent} \textbf{Input}: number of customer arrivals $T$, reward matrix D \\
\hspace*{\algorithmicindent} \textbf{Output}: item assignments $\{i^{(t)}\}_{t = 1, ..., T}$
\begin{algorithmic}
\FOR{t = 1, ..., T}
    \STATE Choose the item to assign: $i^{(t)} = \mathrm{argmax}_i{\mathrm{UCB}_i(t-1)}$ \\
    \STATE Observe reward $D[i^{(t)}, t]$ \\
    \STATE $N_i(t) = N_i(t-1)+1; R_i(t) = R_i(t-1) + D[i^{(t)}, t]$
    \\
\ENDFOR
\end{algorithmic}
\label{alg:ucb}
\end{algorithm}

\subsection{Online Gradient Descent (Online GD)}
Online GD \cite{hazan2016intro} is an algorithm similar to offline gradient descent. In the offline problem, since all the data is known at the start of the matching process, we can compute the gradient of the full objective function. However, in the online problem, since customers arrive one by one, our data set grows over time as we learn more about the item preferences of each customer type. Therefore, we can only use the data we have at a particular time to compute gradients. Thus, we only iterate through the data set once, unlike in the offline GD, where we loop through the data many times.

When applying online GD, we start from an initial $\Lambda_0 \in \kappa$, where $\kappa$ is a convex set. Then we iterate through $t = 1, ..., T$, and at each iteration, we update $\Lambda_t$ in the following way:
$$y_{t+1} = \Lambda_t-\eta_t \nabla f_t(\Lambda_t),$$
$$\Lambda_{t+1} = \mathrm{proj}_{\kappa} (y_{t+1}).$$
Here, $\eta_t \in \mathbb{R}$ is the step size, and $\mathrm{proj}_{\kappa} (y_{t+1})$ is the projection of $y_{t+1}$ onto a convex set $\kappa$. If $y_{t+1} \in \kappa$ then $\Lambda_{t+1}=y_{t+1}$. If $y_{t+1} \not\in \kappa$, then $\Lambda_{t+1}= \mathrm{proj}_{\kappa}(y_{t+1}) = \mathrm{argmin}_{s\in \kappa} {\Vert y_{t+1}-s \Vert}$.

\subsection{Integrated Algorithm}
Combining the UCB algorithm and online GD, we create an efficient integrated algorithm that solves the online stationary problem. Our integrated algorithm relies on a learning component that updates customer preference $P_{ij}$, as well as an optimization component that finds the optimal assignment of items that results in the highest expected reward. We assume that when the $t^{th}$ customer arrives, we will first observe their type $j$ and assign them to an item $i$ using the UCB algorithm. Then, based on whether the customer of type $j$ purchases the item $i$ or not, we update $P^{(t)}$ to reflect a more accurate customer preference matrix: each entry $P^{(t)}_{ij} = R^{(t)}_{ij}/N^{(t)}_{ij}$, where $R^{(t)}_{ij}$ is the total number of times customers of type $j$ purchase item $i$, and $N^{(t)}_{ij}$ is the total number of times item $i$ gets assigned to customers of type $j$ until time $t$. We then use this $P^{(t)}$ in \eqref{eqn:online_lp_3} and apply online GD to get the solution for the dual variable. If the $P^{(t)}$ that we get at each iteration converges, we can halt the UCB algorithm and only run online GD until the dual variables $\Lambda_t$ also converge. 

Note that since $P^{(t)}$ does not necessarily reflect the true preference matrix, the optimization problem that we solve changes at each iteration as we update $P^{(t)}$. In the following section, we theoretically show that as long as the number of customer arrivals are sufficient, $P^{(t)}$ eventually converges to the true $P_{ij}$. Our analysis leads to the algorithm in \Cref{alg:integratedalg}.

\begin{algorithm}
\caption{Online Integrated Algorithm}
\label{alg:integratedalg}

\hspace*{\algorithmicindent} \textbf{Input}: Customer arrivals $t = 1, ..., T$, number of customer types $m$, number of items $n$, budgets $b \in \mathbb{R}^n$,  rewards $r \in \mathbb{R}^n$, initial preference matrix $P^{(0)} \in \mathbb{R}^{m\times n}$, initial dual variable $\Lambda_0 \in \mathbb{R}^n$, maximum number rounds of UCB $R_{\mathrm{max}}$.\\
\hspace*{\algorithmicindent} \textbf{Output}: item assignments for $t = 1, ..., T$. \\
\begin{algorithmic}
\vspace{-3mm}
\FOR{t = 1, ..., T}
    \STATE Observe the type of this customer: $j = 1, ... ,m$. \\
    \IF {$\left\lVert P^{(t)}-P^{(t-1)}\right\rVert > \epsilon$ \AND $t\leq R_{\mathrm{max}}$}
        \STATE Assign item $i$ with maximum UCB value for customer type $j$. \\
    \ELSE
        \STATE Assign item $i$ with $\Lambda_\mathrm{t-1}.$\\
    \ENDIF
    \STATE Update $P^{(t)}_{ij} = R^{(t)}_{ij}/N^{(t)}_{ij}$.\\
    \STATE Define $f_t(\Lambda) = f_t(\Lambda, P^{(t)})$ according to \eqref{eqn:online_lp_3}.\\
    \STATE $\Lambda_{t} = \mathrm{proj}_{\kappa}\{\Lambda_{t-1}-\eta_{t-1} \nabla f_t(\Lambda_{t-1})\}.$\\
\ENDFOR 
\end{algorithmic}
\end{algorithm}

\subsection{Upper Bound of Average Regret}
Recall that in the online model, our goal is to minimize the regret, as defined in \Cref{reg}. Here, we show that using the integrated algorithm, the average regret converges to zero as the number of customer arrivals approaches infinity.
\begin{theorem}
\label{theorem:stationary_regret}
Consider \Cref{alg:integratedalg}, we have: 
$$\limsup\limits_{T \rightarrow \infty} \frac{\mathrm{regret}_T}{T}=0.$$
\end{theorem}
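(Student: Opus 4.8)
\emph{Proof proposal.} The plan is to split $\mathrm{regret}_T$ into an \emph{optimization error}, incurred because online GD tracks a moving sequence of objectives, and a \emph{learning error}, incurred because the estimated preference matrix $P^{(t)}$ is not yet the truth $P^\ast$, and to show each is $o(T)$. Since by \eqref{eqn:online_lp_3} the objective $f_t(\cdot,P)$ does not depend on $t$, write $f(\Lambda,P):=f_t(\Lambda,P)$ and $f^\ast:=f(\Lambda^\ast,P^\ast)=\min_\Lambda f(\Lambda,P^\ast)$, so that $\sum_{t=1}^T f_t(\Lambda^\ast,P^\ast)=Tf^\ast$. Then
\begin{equation*}
\mathrm{regret}_T=\underbrace{\sum_{t=1}^T\bigl(f(\Lambda_t,P^{(t)})-f(\Lambda^\ast,P^{(t)})\bigr)}_{A_T}+\underbrace{\sum_{t=1}^T\bigl(f(\Lambda^\ast,P^{(t)})-f^\ast\bigr)}_{B_T}.
\end{equation*}

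First I would bound $A_T$ using the online GD analysis. The iterates $\Lambda_t$ of \Cref{alg:integratedalg} are precisely the online gradient descent iterates for the convex loss sequence $g_t(\Lambda):=f(\Lambda,P^{(t)})$ on the compact domain $\kappa$ (which we take to contain $\Lambda^\ast$). A short computation gives $\partial_{\Lambda_i}f(\Lambda,P)=-\sum_{j}\tfrac{\lambda_j}{\sum_s\lambda_s}P_{ij}x_{ij}+b_i/T$, which with $P_{ij},x_{ij}\in[0,1]$ and $\sum_i x_{ij}=1$ is bounded in norm by a constant $G$ independent of $t$, $\Lambda$, and $P^{(t)}$. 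Hence Zinkevich's theorem \cite{zinkevich2003online} with step sizes $\eta_t=\Theta(1/\sqrt{t})$ yields
\begin{equation*}
A_T\le\sum_{t=1}^T g_t(\Lambda_t)-\min_{\Lambda\in\kappa}\sum_{t=1}^T g_t(\Lambda)=O(\sqrt{T})=o(T).
\end{equation*}

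Next I would bound $B_T$ via the convergence $P^{(t)}\to P^\ast$. During the UCB phase the confidence bonus $\sqrt{3\log(t)/(2N_i(t-1))}$ forces every under-sampled item to be re-tried, so every item $i$ is assigned to every customer type $j$ infinitely often; after UCB halts, the assignment induced by \eqref{eqn:primal} still places strictly positive probability on every item, so $N^{(t)}_{ij}\to\infty$ almost surely for each $(i,j)$, because type $j$ arrives by a Poisson process of positive rate. The observed purchases being i.i.d.\ Bernoulli$(P^\ast_{ij})$, the strong law gives $P^{(t)}_{ij}=R^{(t)}_{ij}/N^{(t)}_{ij}\to P^\ast_{ij}$ a.s., hence $\|P^{(t)}-P^\ast\|\to 0$. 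On the compact set of preference matrices with entries in $[0,1]$ whose column maxima are bounded below (true for all large $t$), $P\mapsto f(\Lambda^\ast,P)$ is Lipschitz, so $|f(\Lambda^\ast,P^{(t)})-f^\ast|\le L\|P^{(t)}-P^\ast\|\to 0$; a convergent sequence has vanishing Cesàro average, so $B_T=o(T)$ a.s. For the matching lower bound, $f(\Lambda_t,P^{(t)})\ge\min_\Lambda f(\Lambda,P^{(t)})$ and the value function $P\mapsto\min_\Lambda f(\Lambda,P)$ is continuous, so $\mathrm{regret}_T\ge\sum_{t=1}^T\bigl(\min_\Lambda f(\Lambda,P^{(t)})-f^\ast\bigr)=o(T)$. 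Combining, $\mathrm{regret}_T/T\to 0$ a.s., which gives the claim.

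The main obstacle is the last ingredient, $P^{(t)}\to P^\ast$, in the presence of the cap $R_{\max}$ and the early-stopping test $\|P^{(t)}-P^{(t-1)}\|>\epsilon$: once the algorithm leaves the UCB phase one must genuinely argue that the $\Lambda$-driven assignment still samples every pair $(i,j)$ often enough for its estimate to converge, or else show that an item that ceases to be sampled has $x_{ij}$ so small that it contributes negligibly to $f$ and to the collected reward. A secondary point is bookkeeping: the statement should be read almost surely (equivalently, in expectation), and one should make the rate in $B_T$ --- of order $\sqrt{\log(t)/\min_{i,j}N^{(t)}_{ij}}$ with high probability --- explicit enough to confirm it is no larger than the $O(\sqrt{T})$ optimization term.
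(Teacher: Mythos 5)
Your proof is correct in outline and reaches the same conclusion, but it takes a genuinely different route from the paper's. The paper decomposes the per-step regret as $|f_t(\Lambda_t,P^{(t)})-f_t(\Lambda_t,P^\ast)|+|f_t(\Lambda_t,P^\ast)-f_t(\Lambda^\ast,P^\ast)|$, i.e.\ it measures the learning error at the moving iterate $\Lambda_t$ and then applies the online-GD telescoping bound to the losses $f_t(\cdot,P^\ast)$; your decomposition instead splits at $f(\Lambda^\ast,P^{(t)})$, so that the optimization term $A_T$ is the OGD regret for the losses $g_t(\Lambda)=f(\Lambda,P^{(t)})$ whose gradients the algorithm \emph{actually} computes. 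Your version is the one to which Zinkevich's theorem applies verbatim: in the paper's inequality \eqref{eqn:convexity} the vector $\nabla_t=\nabla_\Lambda f_t(\Lambda_t,P^{(t)})$ is used as if it were a subgradient of $f_t(\cdot,P^\ast)$ at $\Lambda_t$, which is not exact and would require an additional gradient-perturbation term; your rearrangement quietly repairs that. For the learning term the two arguments also differ: the paper invokes the $O(\sqrt{T\log T})$ UCB regret bound to control $\sum_t|f_t(\Lambda_t,P^{(t)})-f_t(\Lambda_t,P^\ast)|$ (without spelling out how the bandit reward-regret translates into the dual-objective gap), which buys an explicit rate; you instead use infinitely-often sampling, the strong law, Lipschitz continuity of $P\mapsto f(\Lambda^\ast,P)$, and a Ces\`aro argument, which buys an almost-sure statement and, together with your lower bound via $f(\Lambda_t,P^{(t)})\ge\min_\Lambda f(\Lambda,P^{(t)})$, actually yields the stated equality $\limsup_T \mathrm{regret}_T/T=0$ rather than only the inequality $\le 0$ that the paper's one-sided estimate delivers.

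The obstacle you flag at the end is real, but it is not a defect specific to your proof: the paper has the same unaddressed issue, since its appeal to the UCB regret bound implicitly assumes UCB runs for all $T$ rounds, whereas \Cref{alg:integratedalg} caps it at $R_{\max}$ rounds and may stop early on the test $\|P^{(t)}-P^{(t-1)}\|>\epsilon$. Your proposed fix --- noting that the softmax allocation \eqref{eqn:primal} assigns strictly positive probability to every item, bounded away from zero while $\Lambda_t$ remains in the compact set $\kappa$, so every pair $(i,j)$ is still sampled infinitely often after UCB halts --- is the right way to close it, and should be stated as a lemma rather than left as a remark.
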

\begin{proof}
Recall that the optimal offline solution is the minimizer of \eqref{eqn:dual}, which can be re-written as $\Lambda^*=\mathrm{argmin}_{\Lambda\in \kappa}{\sum_{t=1}^{T} f_t(\Lambda, P^\ast)}.$ 
 Assuming $\Lambda^* \in \kappa$, when projecting $y_{t+1}$ onto $\kappa$, we must have 
 $\Vert \Lambda_{t+1}-\Lambda^*\Vert 
 = \Vert \mathrm{proj}_\kappa(y_{t+1}) - \Lambda^* \Vert 
 \leq \Vert y_{t+1}-\Lambda^*\Vert$, where 
 $y_{t+1} = \Lambda_{t}-\eta_{t} \nabla f_{t+1}(\Lambda_{t})$. 
 Define $\bigtriangledown_t=\bigtriangledown_{\Lambda} f_t(\Lambda_t, P^{(t)})$. We have that 
\begin{equation*}
    \Vert y_{t+1}-\Lambda^*\Vert^2 = \Vert \Lambda_t- \eta_t\bigtriangledown_t -\Lambda^*\Vert^2=\Vert \Lambda_t-\Lambda^*\Vert^2+\eta_t^2 \Vert \bigtriangledown_t \Vert^2 -2\eta_t \langle \bigtriangledown_t,\Lambda_t-\Lambda^* \rangle.
\end{equation*}
By the convexity of $f_t$, we can get 
\begin{equation} 
\label{eqn:convexity}
\begin{aligned}
    f_t(\Lambda_t, P^\ast)-f_t(\Lambda^*, P^\ast) & \leq \langle \bigtriangledown_t,\Lambda_t-\Lambda^* \rangle \\ & 
    \leq \frac{1}{2\eta_t}(\Vert \Lambda_t-\Lambda^* \Vert^2-\Vert \Lambda_{t+1}-\Lambda^* \Vert^2)+\frac{\eta_t}{2}\Vert \bigtriangledown_t\Vert^2.
\end{aligned}
\end{equation}
Now, we fix $\eta_t$ and let $D$ be the diameter of the convex set $\kappa$. Let $G$ be s.t. $\Vert \bigtriangledown_t\Vert \leq G$ for all $1 \leq t \leq T$ and for all $\Lambda \in \kappa$. We define $\eta = \eta_t=\frac{D}{G\sqrt{T}}$ for $1 \leq t \leq T$. If we sum over $t$ for \eqref{eqn:convexity}, we get
\begin{equation*}
\begin{aligned}
    \sum_{t=1}^{T}(f_t(\Lambda_t, P^\ast)-f_t(\Lambda^*, P^\ast)) 
    & \leq \frac{1}{2\eta}\Vert \Lambda_0-\Lambda^* \Vert^2+\frac{\eta}{2}\sum_{t=1}^{T}\Vert \bigtriangledown_t\Vert^2
    \\ & \leq \frac{1}{2\eta} D^2+\frac{\eta}{2} T G^2
    \\ & = GD\sqrt{T}.
\end{aligned}
\end{equation*}
Note that the function $f_t$ is different for the online and offline problems because in the online problem, the preference $P$ gets updated at each iteration. We have:
\begin{equation} 
\label{eqn:online_eqn_1}
    f_t(\Lambda_t, P^{(t)})-f_t(\Lambda^*, P^\ast) \leq \vert f_t (\Lambda_t, P^{(t)})-f_t(\Lambda_t,P^\ast) \vert  + \vert f_t(\Lambda_t, P^\ast)-f_t(\Lambda^\ast, P^\ast) \vert .
\end{equation}
If we sum over \eqref{eqn:online_eqn_1} for $1 \leq t \leq T$, we get the following:
\begin{equation*}
    \mathrm{regret}_T \leq \sum_{t=1}^{T} \vert f_t (\Lambda_t, P^{(t)})-f_t(\Lambda_t,P^\ast) \vert + GD\sqrt{T}.
\end{equation*}
Here, $\vert f_t (\Lambda_t, P^{(t)})-f_t(\Lambda_t,P^\ast) \vert$ represents the regret resulting from approximating $P^\ast$ with $P^{(t)}$. If we apply the UCB algorithm to obtain the approximations $P^{(t)}$ for $t = 1, ..., T$, the total regret after $T$ iterations is $O(T \log T)$ \cite{lattimore2018bandit}. That is, there exists $C > 0$ such that
$$\sum_{t=1}^T |f_t(\Lambda_t,P^{(t)})-f_t(\Lambda_t,P^{\ast})| \leq C\sqrt{T\log T}.$$
Thus, we have
\begin{equation*}
   \limsup\limits_{T \rightarrow \infty} \frac{\mathrm{regret}_T}{T} \leq \limsup\limits_{T \rightarrow \infty} C \sqrt{\frac{\log T}{T}}+GD \sqrt{\frac{1}{T}} = 0，
\end{equation*}
where the last equality follows from L'Hopital's Rule. Thus, the average regret converges to 0 when $T \rightarrow \infty$.
\end{proof}

\section{Online Non-Stationary Problem} 
\label{sec:OnlineNonstationaryProblem}

Realistically, customers do not always arrive following a stationary Poisson process. We now extend our previous discussion to consider a more practical setting in which the customers arrive following non-stationary Poisson processes. Removing the assumption of the stationary Poisson processes leads to a more complex formulation of the online linear program that cannot simply be solved using \Cref{alg:integratedalg}. In existing literature of the online optimization problem with heterogeneous customer arrivals, if the arrival rates and customer preference $P_{ij}$ are both known, we can apply methods such as the Large-or-Small Algorithm \cite{Stein2018AdvanceSR}. However, there is no existing algorithm that can perform online optimization without previous knowledge of arrival rates or customer preference $P_{ij}$. The difficulty lies in that if we perform online learning for $P_{ij}$, the dual variables do not converge. Moreover, if the arrival processes are modeled as non-stationary Poisson processes, then the probability that the next customer arrival comes from type $j$ is almost impossible to calculate. 

In this section, we propose a time segmentation algorithm that can approximate this probability and convert the non-stationary arrival problem into a series of stationary problems. We can then solve each stationary problem using the method discussed in \Cref{sec:IntegratedAlgorithm}. We give a detailed description of this algorithm in \Cref{subsec:non_stationary_algo}, and provide an upper bound of the average regret of this approach in \Cref{subsec:upper_bound_avg_regret}.

\subsection{Algorithm Description}
\label{subsec:non_stationary_algo}

In the online stationary problem, we obtain the probability that the next customer arrival is of type $j$ by directly invoking the superposition property of Poisson processes. However, in a non-stationary Poisson process, this probability continuously depends on time and thus cannot be simply computed as a constant. 
To tackle this difficulty that arises, we assume that the Poisson rate function $\lambda_j(t)$ changes slowly inside a sufficiently small time interval. This is a realistic assumption since within a short time period---for example, 10 minutes---it is unlikely that the density of customer arrivals would change drastically. The following discussion thus relies on the assumption that the amount of the change of arrival rate function $\lambda_j(t)$ in a specific time segment $I$, which is defined by $\max_{t \in I} \lambda_j(t) - \min_{t \in I} \lambda_j(t)$, is bounded by some constant. As we shall see later, to perform an accurate and computationally feasible approximation, we would need both this constant to be sufficiently small, and the length of the time interval $I$ to be reasonably large.
We make a further assumption that the rate function $\lambda_j(t)$ is bounded in any given time interval for all $ 1\leq j\leq m$. Note that the rate functions may still be discontinuous. An example rate function that satisfies the above assumptions would be: 
\begin{equation*}
\lambda(t) =
\begin{cases} 
      0.5\sin(t) + 30 & 0 \leq t \leq 1 \\
      0.01t + 5 & 1 \leq t \leq 2
   \end{cases}.
\end{equation*}
This represents a realistic setting when a website experiences heavier traffic during the first hour, while the customer arrivals slow down in the second hour; however, arrival rates within one hour do not change drastically. 

We proceed to describe the main ideas behind the algorithm for the online non-stationary problem. Recall the Piecewise Constant Approximation Theorem:
\begin{theorem} \label{theorem:piece}
   If $f$ is a continuous function defined on compact $D \in \mathbb{R}$, it can be uniformly approximated by a piecewise constant function.
\end{theorem}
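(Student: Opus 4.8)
The plan is to derive this from the Heine--Cantor theorem: a continuous function on a compact set is \emph{uniformly} continuous. Since $D \subseteq \mathbb{R}$ is compact it is in particular bounded, so $D \subseteq [a,b]$ for some $a \leq b$. Fix $\varepsilon > 0$. By uniform continuity of $f$ on $D$, choose $\delta > 0$ such that $|f(x) - f(y)| < \varepsilon$ for all $x,y \in D$ with $|x-y| < \delta$. This $\delta$ is the only nontrivial input; the rest is construction and bookkeeping.

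Next I would build an explicit piecewise constant approximant. Pick an integer $N$ with $(b-a)/N < \delta$, set $t_k = a + k(b-a)/N$ for $k = 0,1,\dots,N$, and let $I_k = [t_{k-1},t_k)$ for $1 \leq k < N$ and $I_N = [t_{N-1},t_N]$, so that the $I_k$ partition $[a,b]$ into $N$ cells of length $< \delta$. For each $k$ with $I_k \cap D \neq \emptyset$, choose a sample point $\xi_k \in I_k \cap D$ and declare $g$ to equal the constant $f(\xi_k)$ on $I_k$; on the finitely many cells disjoint from $D$, set $g$ equal to anything, say $0$. By construction $g$ is piecewise constant with at most $N$ pieces.

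For the error estimate: given $x \in D$, there is a unique $k$ with $x \in I_k$, and then $\xi_k \in I_k$ too, so $|x - \xi_k| \leq (b-a)/N < \delta$ and hence $|f(x) - g(x)| = |f(x) - f(\xi_k)| < \varepsilon$. Taking the supremum over $x \in D$ yields $\sup_{x \in D}|f(x) - g(x)| \leq \varepsilon$, which is uniform approximation of $f$ by a piecewise constant function to within $\varepsilon$; since $\varepsilon$ was arbitrary, the theorem follows.

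The one place to be careful --- the ``main obstacle,'' such as it is --- is that $D$ need not be an interval, so ``piecewise constant function'' has to be interpreted on an enclosing interval $[a,b]$ (or, equivalently, one restricts the step function to $D$). The remedy is exactly the step above: partition the enclosing interval, sample $f$ only on the cells that actually meet $D$, and observe that the cells disjoint from $D$ are irrelevant to the supremum over $D$. No heavier machinery (Stone--Weierstrass, mollification, etc.) is required; the whole argument is a single $\varepsilon$--$\delta$ construction powered by compactness.
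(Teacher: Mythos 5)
Your proof is correct and complete: the paper itself states Theorem~\ref{theorem:piece} as a recalled classical fact and supplies no proof, and your argument via Heine--Cantor (uniform continuity on the compact set $D$) plus a uniform partition of an enclosing interval into cells of diameter less than $\delta$ is the standard one. You also correctly handle the only subtle point --- that $D$ need not be an interval --- by defining the step function on $[a,b]$ and sampling only on cells that meet $D$, so nothing is missing.
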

\Cref{theorem:piece} implies that we can use a piecewise constant function to approximate each arrival rate function. In other words, there is a series of time segments $\{I\}$ in which
$$\max_{t \in I} \lambda_j(t) - \min_{t \in I} \lambda_j(t) \leq \epsilon,$$ 
for a sufficiently small number $\epsilon$.
Inside each time segment, the rate functions can be approximated as constants. In doing so, the non-stationary processes can be approximated with multiple stationary Poisson processes in small time segments. However, it is not computationally feasible to entirely rely on this approach. If we divide the time span into small time segments based on the approximation in \Cref{theorem:piece}, the length of time segments has to be extremely small in some cases in order to achieve the desired accuracy. In such cases, we end up dealing with too many time segments. Solving online stationary problems in a large number of time segments in these instances leads to excessive computational cost. Therefore, we can only apply piecewise constant approximation when the rate functions $\lambda_j(t)$'s all change extremely slowly in a time segment of reasonable length. In our algorithm, we would refer to these time segments as type A.

If the rate functions change moderately slowly, and we are unable to find a time segment of sufficient length on which to perform piecewise constant approximation, we turn to a different approach. Instead of approximating the arrival rates as constants, we instead approximate the probability that the next customer arrival is of type $j$ directly. In particular, we find time segments in which the difference between the upper and lower bounds of this probability is small. We then pick a random value between the upper and lower bounds to be the approximated probability in that time segment, without incurring significant loss in accuracy. In our algorithm, we refer to these time segments as type B. To identify such a time segment, we first introduce \Cref{divide}, which can be proved by contradiction.
\begin{lemma} 
\label{divide}
If $f$ is a continuous function defined in some domain $D$, one can divide $D$ into disjoint segments $I_1, I_2, ...,$ s.t. in each segment, the amount that $f$ changes is bounded by a given threshold $v$.
\end{lemma}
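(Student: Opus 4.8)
The plan is to first pin down what ``the amount that $f$ changes'' on a segment $I$ should mean: the oscillation $\omega_f(I) := \sup_{x,y\in I}|f(x)-f(y)|$, which for a closed bounded interval and continuous $f$ equals $\max_{t\in I}f(t)-\min_{t\in I}f(t)$. So the lemma asks for a partition of $D$ into segments $I_k$ with $\omega_f(I_k)\le v$. The one elementary fact I would use repeatedly is an immediate consequence of continuity: for each $c\in D$ there is an open interval $J_c\ni c$ with $\omega_f(J_c\cap D)\le v$ --- choose $\delta_c>0$ so that $|f(x)-f(c)|<v/2$ whenever $x\in D$ and $|x-c|<\delta_c$, put $J_c=(c-\delta_c,c+\delta_c)$, and apply the triangle inequality.

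When $D=[a,b]$ is a compact interval I would run the contradiction argument the statement hints at. Suppose $[a,b]$ admits no finite partition into closed subintervals of oscillation $\le v$. Bisecting $[a,b]$, at least one half again admits no such finite partition (otherwise concatenating the two would give one for $[a,b]$); iterating yields nested closed intervals $[a_n,b_n]$ with $b_n-a_n=2^{-n}(b-a)$, none admitting a good finite partition. Completeness gives $\bigcap_n[a_n,b_n]=\{c\}$, and the elementary fact at $c$ produces $\delta>0$ with $\omega_f([c-\delta,c+\delta]\cap[a,b])\le v$; but $[a_n,b_n]\subseteq(c-\delta,c+\delta)$ for $n$ large, so $[a_n,b_n]$ is its own good one-piece partition --- a contradiction. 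Hence a finite good partition of $[a,b]$ exists, and trimming the shared endpoints makes the pieces disjoint. (Equivalently one may cite Heine--Cantor: $f$ is uniformly continuous on $[a,b]$, so some $\delta>0$ has $|f(x)-f(y)|<v$ for $|x-y|<\delta$, and any subdivision into subintervals of length $<\delta$ works; the bisection above is precisely the proof of that fact in this setting.)

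For a general $D\subseteq\mathbb{R}$ --- in the paper's applications a bounded interval, possibly half-open, or a finite union of such --- I would patch the local picture together: the intervals $\{J_c:c\in D\}$ cover $D$, so by the Lindel\"of property (or compactness, when it applies) there is a countable subcover $\{J_{c_k}\}$; listing the endpoints of the $J_{c_k}$ in increasing order produces a sequence of breakpoints, locally finite in $D$, and the resulting segments each lie in some $J_{c_k}$ and so have oscillation $\le v$. Relabel them $I_1,I_2,\dots$.

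The only genuinely delicate point, I expect, is this last bookkeeping step: converting an open cover into an honest disjoint partition of $D$ by segments while (i) keeping each piece inside a single $J_{c_k}$ so the bound is inherited, and (ii) preventing the breakpoints from accumulating inside $D$, which would force uncountably many pieces. On a single bounded interval it is routine --- just order the (finitely or countably many) endpoints --- but phrasing it cleanly for a disconnected $D$ takes a line of care; everything else follows directly from continuity. If ``amount $f$ changes'' were instead read as $|f(b_I)-f(a_I)|$ on $I=[a_I,b_I]$, the same proof applies since $|f(b_I)-f(a_I)|\le\omega_f(I)$.
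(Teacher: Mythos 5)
Your proof is correct and matches the approach the paper intends: the paper offers no argument beyond the remark that the lemma ``can be proved by contradiction,'' and your nested-bisection contradiction (equivalently, Heine--Cantor uniform continuity) is exactly that argument, fully worked out for the compact case that the algorithm actually uses ($D=[t_0,t_{\mathrm{end}}]$). The one loosely phrased step --- listing the endpoints of a countable subcover ``in increasing order,'' which can fail if they accumulate --- is a point you already flag yourself and is moot here, since exhausting each component of $D$ by compact subintervals settles the general case.
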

By choosing an appropriate threshold $v \gg \epsilon$ that bounds the amount of the change of rate functions, one can control the amount of inaccuracy incurred by the approximation of the probability of next customer arrival being of type $j$. This is captured in the following Theorem:
\begin{theorem}
    Assume that during time period $[t_1, t_2]$, for some $v \gg \epsilon > 0$, $$\max_{t \in [t_1, t_2]} \lambda_j(t) - \min_{t \in [t_1, t_2]}\lambda_j(t)\leq v \hspace{5mm} \forall 1 \leq j \leq m.$$ Let $U(j)$, $L(j)$ denote the upper and lower bound of probability that the arriving customer is of type $j$ during the time period $[t_1, t_2]$. Let $Y = \sum_{j=1}^m \max_{s \in [t_1, t_2]} \lambda_j(s)$ and $y = \sum_{j=1}^m \min_{s \in [t_1, t_2]} \lambda_j(s)$.
    Then,
    \begin{equation}\label{eq:Uj}
        U(j) \leq \frac{\max_{s \in [t_1, t_2]}\lambda_j(s)}{y},
    \end{equation}
    \begin{equation}\label{eq:Lj}
        L(j) \geq \frac{\min_{s \in [t_1, t_2]}\lambda_j(s)}{Y}.
    \end{equation}
    It follows that if $\delta(j)=U(j)-L(j)$, we have:
    \begin{equation}
    \label{eq:bound}
        \delta(j) \leq \frac{mv^2+(y+m\min_{s \in [t_1, t_2]}\lambda_j(s))v}{y^2+mvy}.
    \end{equation}
\end{theorem}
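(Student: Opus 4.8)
The plan is to reduce everything to bounding, over the window $[t_1,t_2]$, the instantaneous probability that an arriving customer is of type $j$. By the superposition property of independent Poisson processes, conditioned on an arrival at time $t$ this probability equals $p_j(t) = \lambda_j(t) / \sum_{s=1}^m \lambda_s(t)$; thus $U(j) = \sup_{t\in[t_1,t_2]} p_j(t)$ and $L(j) = \inf_{t\in[t_1,t_2]} p_j(t)$, and it suffices to bound this ratio pointwise.

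For \eqref{eq:Uj} I would bound the numerator and denominator in opposite directions simultaneously: for each $t\in[t_1,t_2]$ we have $\lambda_j(t)\le \max_{s\in[t_1,t_2]}\lambda_j(s)$ while $\sum_{k=1}^m\lambda_k(t)\ge \sum_{k=1}^m\min_{s\in[t_1,t_2]}\lambda_k(s)=y$, so $p_j(t)\le \max_s\lambda_j(s)/y$; taking the supremum over $t$ gives \eqref{eq:Uj}. The estimate \eqref{eq:Lj} is the mirror image, using $\lambda_j(t)\ge\min_s\lambda_j(s)$ in the numerator and $\sum_k\lambda_k(t)\le Y$ in the denominator.

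For \eqref{eq:bound}, since $U(j)\le\max_s\lambda_j(s)/y$ and $L(j)\ge\min_s\lambda_j(s)/Y$, we obtain $\delta(j)\le \max_s\lambda_j(s)/y-\min_s\lambda_j(s)/Y$. The hypothesis $\max_s\lambda_j(s)-\min_s\lambda_j(s)\le v$ gives $\max_s\lambda_j(s)\le \min_s\lambda_j(s)+v$, and summing that same hypothesis over $j$ gives $Y\le y+mv$; since the rates are nonnegative, replacing $Y$ by the larger $y+mv$ only decreases $\min_s\lambda_j(s)/Y$. Writing $m_j:=\min_{s\in[t_1,t_2]}\lambda_j(s)$, this yields
\[
\delta(j)\le\frac{m_j+v}{y}-\frac{m_j}{y+mv}=\frac{(m_j+v)(y+mv)-m_jy}{y(y+mv)}=\frac{mv^2+(y+mm_j)v}{y^2+mvy},
\]
which is precisely \eqref{eq:bound} after expanding the numerator.

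The argument is essentially algebra on ratios, so I do not expect a serious obstacle; the two points to watch are the direction of the inequalities when combining the separate bounds on $U(j)$ and $L(j)$ into a bound on their difference, and the use of nonnegativity of the $\lambda_k$ to enlarge $Y$ to $y+mv$ in the lower bound for $L(j)$. The only modeling input — the one place a careful reader might want an extra line of justification — is the identity $p_j(t)=\lambda_j(t)/\sum_s\lambda_s(t)$, which is the instantaneous form of the superposition property and remains valid for time-varying rates.
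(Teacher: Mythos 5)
Your proof is correct, and the paper itself states this theorem without supplying any proof, so your argument in fact fills that gap rather than deviating from an existing one. The two bounds on $U(j)$ and $L(j)$ follow exactly as you say from bounding numerator and denominator of $\lambda_j(t)/\sum_k\lambda_k(t)$ in opposite directions, and your algebra for \eqref{eq:bound} --- using $M_j\le m_j+v$, $Y\le y+mv$, and nonnegativity of the rates --- expands precisely to the stated expression.
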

Equation \eqref{eq:bound} demonstrates the relationship between the chosen threshold $v$ and the difference between the upper and lower bound $\delta(j)$. In the case when the rate functions change moderately slowly, one can obtain an approximation of the probability that the next customer arrival is of type $j$ by controlling the changes of the arrival rate functions inside each time segment. It is noteworthy that $\delta(j)$ not only depends on $v$, but also depends on rate functions inside specific time segments. Therefore, even if we seek a constant confidence bound $\delta(j)$, the desired values of $v$ will vary at different times.

\begin{algorithm}
\caption{Time Segmentation}
\hspace*{\algorithmicindent} \textbf{Input}: rate functions $\lambda_j(t)$, time span $[t_0, t_\mathrm{end}]$, parameters $\epsilon, \delta, d > 0$ \\
\hspace*{\algorithmicindent} \textbf{Output}: time segments $I_1, I_2, ..., I_l$ 
\begin{algorithmic}
\WHILE{$t < t_\mathrm{end}$}
    \FOR {j = 1, ..., m}
        \STATE compute the largest $t_j$ s.t. $\max_{s \in [t,t_j]}\lambda_j(s) - \min_{s \in [t,t_j]}\lambda_j(s)\leq \epsilon$
    \ENDFOR
    \STATE $t^* \gets \min_{j \in [m]}t_j$
    \IF{$t^*-t \geq d$}
        \STATE Add $[t, t^*]$ as one of the time segments, and mark it as type A; $t \gets t^*$
        \BREAK
    \ELSE
        \STATE Solve $mv^2+(\sum_{j}\lambda_j(t) + m\lambda_j(t) - \delta m\sum_{j}\lambda_j(t))v-\delta (\sum_{j}\lambda_j(t))^2=0$ for $v$
        \FOR{j = 1, ..., m}
        \STATE compute the largest $t'_j$ s.t. $\max_{s \in [t,t'_j]}\lambda_j(s) - \min_{s \in [t,t'_j]}\lambda_j(s)\leq v$
        \ENDFOR
        \STATE $t^* \gets \min_{j \in [m]}t'_j$
        \STATE Add $[t, t^*]$ as one of the time segments, and mark it as type B; $t \gets t^*$
    \ENDIF
\ENDWHILE
\end{algorithmic}
\label{alg:time} 
\end{algorithm}

In \Cref{alg:time}, we propose a time segmentation algorithm that divides the entire time span into small time segments. Note that we always first look for time segments of type A, in which we can perform piecewise constant approximation. However, if such time segments do not have sufficient length, we turn to look for time segments of type B, in which we can approximate the probability of the next customer arrival being a particular type. For the time segments \{$I^{(A)}$\} that are marked as type A, we select a random $t \in I^{(A)}$ and approximate the arrival rates as a fixed number: $\lambda_j(t) \approx \lambda_j$. We can then solve the non-stationary problem in this time segment as an online stationary problem. On the other hand, for the time segments \{$I^{(B)}$\} that are marked as type B, we compute $U(j)$ and $L(j)$ as in \eqref{eq:Uj} and \eqref{eq:Lj}. We then approximate $\frac{\lambda_j}{\sum_{s=1}^{m}\lambda_s}$ as a random number between $U(j)$ and $L(j)$. In this way, we again approximate the non-stationary problem as a stationary problem in this time segment. To solve the online stationary problem in each time segment, we simply apply the integrated \Cref{alg:integratedalg}, using both UCB and online GD. In practice, when the number of customer arrivals in each time segment is large, the dual variables should converge before reaching the end of the time segment.

\subsection{Upper Bound of Average Regret}
\label{subsec:upper_bound_avg_regret}
When solving online non-stationary problem, we first apply \Cref{alg:time} to divide the time span into small time segments, and then apply \Cref{alg:integratedalg} to solve the online stationary problem within each time segment. As in \Cref{sec:IntegratedAlgorithm}, we now provide an analysis of the average regret bound of this approach. Since the online non-stationary algorithm involves both the time segmentation algorithm and the integrated algorithm, the regret computation here would require our previous analysis of UCB and online GD. Our analysis over the average regret bound here will be focused on a single time segment. 
\begin{theorem}
\label{theorem:non_stationary_regret}
Consider the online non-stationary algorithm described in \Cref{subsec:non_stationary_algo}, in a specific time segment $I$, we have:
\begin{equation}
\label{eqn:non_stationary_avg_regret}
    \limsup\limits_{T \rightarrow \infty} \frac{\mathrm{regret}_T}{T} \leq R\delta
\end{equation}
where $T$ is total number of customer arrivals in $I$, $\delta$ is the confidence bound used for type B time segments and $R = m(\mu\log n + r^*)$, where $r^*=\max_{i \in [n]} r_i$.
\end{theorem}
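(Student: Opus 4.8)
The plan is to fix the segment $I$, let $T$ be the number of arrivals in $I$, and split $\mathrm{regret}_T$ into (i) the regret of \Cref{alg:integratedalg} run on the \emph{approximated} stationary problem that the segmentation procedure hands it, and (ii) the error incurred by replacing the true time‑varying arrival probabilities $q_j^\ast(t)=\lambda_j(t)/\sum_{s=1}^m\lambda_s(t)$ with the fixed constants $q_j$ selected for $I$. Write $\hat f_t(\Lambda,P)$ for the dual objective \eqref{eqn:online_lp_3} formed with the weights $q_j$ (this is what the algorithm minimizes) and $\bar f_t(\Lambda,P)$ for the same expression formed with $q_j^\ast(t)$; the two differ only in the first (log‑partition) term, and \emph{affinely} in the weights, since the $\frac1T\langle\Lambda,b\rangle$ part carries no $q$‑dependence and $Z_j$ does not depend on $q_j$. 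With $\hat\Lambda^\ast=\mathrm{argmin}_{\Lambda\in\kappa}\sum_{t=1}^T\hat f_t(\Lambda,P^\ast)$ and $\Lambda^\ast$ the offline optimum of the true problem,
\begin{align*}
\mathrm{regret}_T &= \Bigl(\sum_{t=1}^T\hat f_t(\Lambda_t,P^{(t)})-\sum_{t=1}^T\hat f_t(\hat\Lambda^\ast,P^\ast)\Bigr)\\
&\quad + \Bigl(\sum_{t=1}^T\hat f_t(\hat\Lambda^\ast,P^\ast)-\sum_{t=1}^T\bar f_t(\Lambda^\ast,P^\ast)\Bigr),
\end{align*}
and I would bound the two bracketed terms separately.

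The first term is precisely the stationary regret studied in \Cref{theorem:stationary_regret}: \Cref{alg:integratedalg} is being run on the convex objectives $\hat f_t$, and the only change from \Cref{sec:IntegratedAlgorithm} is the numerical value of the constants $q_j$ and of the per‑segment budget, neither of which enters that argument. Reusing the $O(\sqrt{T\log T})$ UCB bound and the $GD\sqrt{T}$ online‑GD bound, this term is $o(T)$ and contributes $0$ to the $\limsup$.

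For the second term, note first that $\hat\Lambda^\ast$ minimizes $\sum_t\hat f_t(\cdot,P^\ast)$, so replacing it with $\Lambda^\ast$ only increases that sum; hence the term is at most $\sum_{t=1}^T|\hat f_t(\Lambda^\ast,P^\ast)-\bar f_t(\Lambda^\ast,P^\ast)|$. Since the coefficient of $q_j$ in $f_t$ is $\mu\overline{P_j}\log Z_j$, I would bound it uniformly: assuming $\kappa\subseteq\prod_i[0,r_i]$ gives $0\le r_i-\Lambda_i\le r^\ast$ and $0\le P_{ij}/\overline{P_j}\le1$, so each exponent in $Z_j=\sum_i\exp\bigl(\frac{(r_i-\Lambda_i)P_{ij}}{\mu\overline{P_j}}\bigr)$ lies in $[0,r^\ast/\mu]$, whence $n\le Z_j\le ne^{r^\ast/\mu}$ and $0\le\mu\overline{P_j}\log Z_j\le\mu\log n+r^\ast$. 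Because $q_j$ and $q_j^\ast(t)$ both lie in $[L(j),U(j)]$ with $U(j)-L(j)=\delta(j)\le\delta$ throughout $I$ — which is exactly how $v$ is chosen from \eqref{eq:bound} on a type‑B segment, and holds a fortiori on a type‑A segment where $\epsilon\ll v$ — we get $|q_j-q_j^\ast(t)|\le\delta$, so $|\hat f_t(\Lambda^\ast,P^\ast)-\bar f_t(\Lambda^\ast,P^\ast)|=\bigl|\mu\sum_j(q_j-q_j^\ast(t))\overline{P_j}\log Z_j\bigr|\le m\delta(\mu\log n+r^\ast)=R\delta$. Summing over $t$ and dividing by $T$ bounds the second term by $R\delta$, and combining the two estimates yields \eqref{eqn:non_stationary_avg_regret}.

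The main obstacle I anticipate is the uniform control of $\mu\overline{P_j}\log Z_j$: without a bounded dual feasible set (so that $|r_i-\Lambda_i|\le r^\ast$) the log‑partition $\log Z_j$ is unbounded and no constant $R$ exists, so the proof really hinges on pinning down $\kappa$ and on the fact that $P_{ij}/\overline{P_j}\le1$ keeps the exponents controlled uniformly in $j$. The secondary, bookkeeping point is that the benchmark in the first term must be the optimum $\hat\Lambda^\ast$ of the \emph{approximated} offline problem and not $\Lambda^\ast$, so that the $\delta$‑error is charged exactly once; comparing directly against $\Lambda^\ast$ throughout would inflate the bound to $2R\delta$.
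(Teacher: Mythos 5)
Your proof is correct and follows essentially the same route as the paper's: both reduce the claim to the stationary regret already handled by \Cref{theorem:stationary_regret} plus a per-step non-stationarity error of the form $\bigl|\sum_j (q_j-\phi_j(t))\,\mu\overline{P_j}\log Z_j\bigr|\le m\delta(\mu\log n+r^\ast)$, using the identical bound $\log Z_j\le \log n+r^\ast/\mu$. The only differences are bookkeeping ones: the paper charges the $\delta$-swap on the algorithm's side via a three-term triangle inequality rather than on the benchmark's side via minimality of $\hat\Lambda^\ast$, and it leaves implicit the boundedness of $\kappa$ (so that $0\le r_i-\Lambda_i\le r^\ast$) that you rightly identify as necessary for $\mu\overline{P_j}\log Z_j$ to admit a uniform constant $R$.
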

\begin{proof}
As in the proof of \Cref{theorem:stationary_regret}, we define
$f_t(\Lambda, P)$ to be the dual objective at time $t$. We let $\Lambda^\ast=\mathrm{argmin}{\sum_{t=1}^{T} f_t(\Lambda, P^\ast)}$, where $P^\ast$ is the ground truth preference matrix. We denote $\Lambda_t$ and $P^{(t)}$ as the dual variable and preference matrix obtained by our algorithm at time $t$. Here, we additionally define $f_t^{'}(\Lambda_t, P^{(t)})$ to be the dual objective function of the online non-stationary problem:
$$
f_t^{'}(\Lambda_t, P^{(t)})= \mu \sum_{j=1}^m \overline{P_j}^{(t)} \log(Z_j(\Lambda_t, P^{(t)})) \phi_j(t) + \frac{1}{T} \langle \Lambda_t,b \rangle,
$$
where $\overline{P_j}^{(t)} = \max_{i}P^{(t)}_{ij}$, and $\phi_j(t)$ represents the ground truth probability that the next customer arrival is of type $j$ at time $t$.
The regret achieved at time $t$ is thus:
\begin{equation} 
\label{eqn:triangle_non_stationary}
\begin{aligned}
\mathrm{regret}^t_T &= f_t^{'} (\Lambda_t, P^{(t)})-f_t(\Lambda^*, P^\ast) \\ &\leq \vert f_t^{'}(\Lambda_t, P^{(t)})-f_t(\Lambda_t, P^{(t)})\vert + \vert f_t(\Lambda_t, P^{(t)})-f_t(\Lambda_t, P^\ast) \vert  \\ & \hspace{4mm} + \vert f_t(\Lambda_t, P^\ast)-f_t(\Lambda^*, P^\ast) \vert,
\end{aligned}
\end{equation}
where $\Lambda^*=\mathrm{argmin}_{\Lambda\in \kappa}{\sum_{t=1}^{T} f_t(\Lambda, P^\ast)}$ is the optimal solution in the offline problem of this time segment, and $\Lambda_t$ denotes our dual variable at time $t$. As before, $P^{(t)}$ is the preference matrix we have at time $t$ while $P^\ast$ is the ground truth preference matrix.

Using the same techniques as in the proof of \Cref{theorem:stationary_regret}, we can show that the average of the last two terms in \eqref{eqn:triangle_non_stationary} will converge to zero as $T \rightarrow \infty$. Hence, it suffices to show the convergence of the average of the first term, which captures the regret from approximating the non-stationary problem with a stationary problem. Recall that the dual objective of the online-stationary problem is defined as follows in \eqref{eqn:online_lp_3}:
$$
f_t(\Lambda_t, P^{(t)})= \mu \sum_{j=1}^m\frac{\lambda_j}{\sum_{s=1}^m \lambda_s} \overline{P_j}^{(t)} \log(Z_j(\Lambda_t, P^{(t)})) + \frac{1}{T} \langle \Lambda_t,b \rangle,
$$
where $$Z_j(\Lambda_t, P^{(t)})=\sum_{i=1}^n \exp \left( \frac{(r_i-\Lambda_{t,i})P_{ij}^{(t)}}{\mu \overline{P_j}^{(t)}} \right) 
\leq n \exp \left( \frac{r^\ast}{\mu}\right),$$
since $P^{(t)}_{ij} \leq \overline{P_{j}}^{(t)}$.
Hence, we must have $$\log(Z_j) \leq \log(n) + \frac{r^*}{\mu}\hspace{5mm}\forall j \in [m].$$
Additionally, note that 
$$\left\vert \frac{\lambda_j}{\sum_{s=1}^m \lambda_s}-\phi_j(t) \right\vert \leq \max{\left(\frac{\epsilon}{m \min_s{\lambda_s}}, \delta\right)},$$
where the first term on the right hand side results from time segments of type A, and the second term results from the confidence bound $\delta$ in time segments of type B. Realistically, the number of customers who arrive at an online marketplace per second are of the order of thousands or millions. Therefore, the customer arrival rates $\lambda_s$'s are of substantial magnitude. We thus assume that $\frac{\epsilon}{m \min_s{\lambda_s}} \ll \delta$. Now, if we define $R = m(\mu\log n + r^*),$ 
we can bound the difference between $f_t(\Lambda_t, P^{(t)})$ and $f_t^{'}(\Lambda_t, P^{(t)})$ with the following:
$$
\vert f_t^{'}(\Lambda_t, P^{(t)}) - f_t(\Lambda_t, P^{(t)}) \vert \leq R\delta,
$$
It follows that:
\begin{equation}
    \limsup\limits_{T \rightarrow \infty} \frac{\mathrm{regret}_T}{T} \leq \limsup\limits_{T \rightarrow \infty}\frac{\sum_{t=1}^T \vert f_t^{'}(\Lambda_t, P^{(t)}) - f_t(\Lambda_t, P^{(t)})\vert}{T} \leq R\delta.
\end{equation}
\end{proof}

From \Cref{theorem:non_stationary_regret}, we can see that the average regret does not converge to 0, but instead converges to a constant. In particular, $\delta$ corresponds to the confidence bound that we use in the type B time segments. Theoretically, by setting $\delta$ sufficiently small, we can control the average regret to converge to a number close to 0. Another trade-off certainly needs to be taken into account: as we decrease the regret, computational complexity will increase. However, as our numerical experiments later demonstrates, as long as we keep the value of confidence bound reasonably small, the average regret would tend to become negligible as the number of customers get larger. 

\section{Numerical Study}\label{sec:Experiment}
To test the efficacy of our proposed algorithms, we create different synthetic datasets that simulate customer preferences and arrivals following stationary or non-stationary Poisson processes. In order to make our linear programming problem non-trivial, we choose set-ups which guarantee that certain products will be sold out, while other products will have remaining budget in the optimal offline solution. In this section, we first apply \Cref{alg:integratedalg} to solve a set of online stationary problems and compare its empirical performance against the greedy heuristic. We then present experiments with non-stationary Poisson customer arrival processes and apply \Cref{alg:time} along with the integrated algorithm. The results of these experiments confirm the efficacy of our approach in tackling resource allocation problems with heterogeneous customer arrivals.

\subsection{Online Stationary Experiments} 
We first apply \Cref{alg:integratedalg} to a series of online stationary problems with varying numbers of customer arrivals. We compare its performance with the greedy heuristic, which simply matches each incoming customer to the product with largest reward available. The metric for evaluating algorithm performance is the average regret, as defined in \Cref{reg}.

We test our online stationary problem with four different sizes of total customer arrivals: $10^3, 10^4, 10^5, 10^6$. Each of them has the same initial set-up:
\begin{itemize}
    \item There are $m = 10$ types of customers and $d = 10$ products to be assigned. 
    \item The $j^{\mathrm{th}}$ type of customer arrives as a stationary Poisson process with constant arrival rate $\lambda_j = 0.1j$. 
    \item We draw the ground truth preference $P^\ast$ from a $\beta$ distribution, so that the buying behavior of different types of customers differ from each other. Our algorithm does not have any previous knowledge of customer preferences; instead, it learns preferences as the customers arrive.
    \item The budget of products is between 10\% and 30\% of the total population, in descending order. The rewards range from 0.1 to 1, in ascending order. Therefore, the products with higher rewards tend to have lower budgets. 
\end{itemize}
With the above set-up, we demonstrate the potential of integrated algorithm in solving a challenging resource allocation problem. 

\begin{figure}[htbp]
\centering
\includegraphics[width=0.9\textwidth]{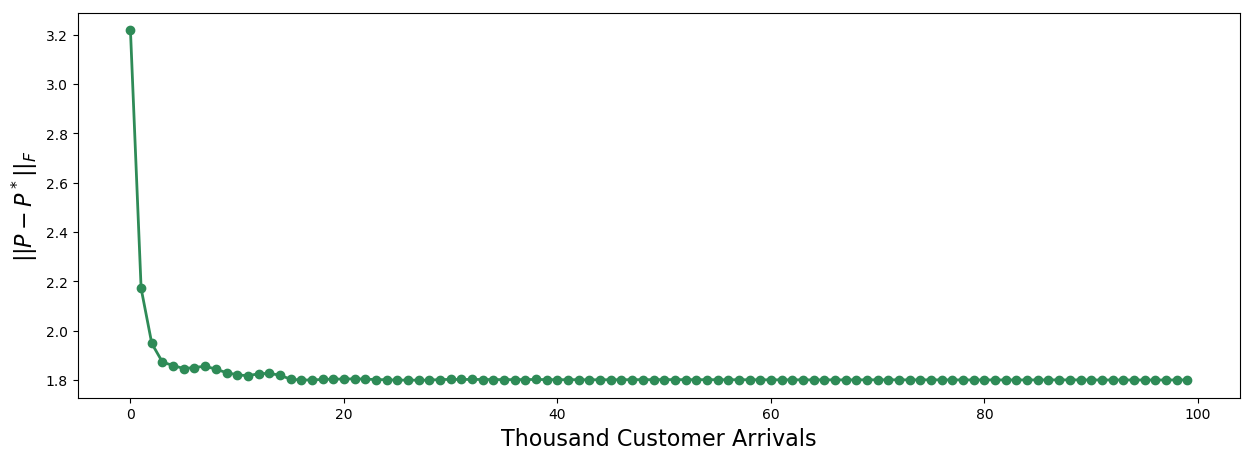}
\caption[Difference between $P^*$ and $P$]{The Frobenius norm between the ground truth $P^\ast$ and our preference matrix $P$ with every thousand customer arrivals.}
\label{fig:p_diff}
\end{figure} 

We first examine the algorithm's ability in learning customer preferences. Recall that in the beginning of the algorithm, we do not have any past knowledge of $P_{ij}$. As each customer arrives, we assign them to a product either by UCB or by the solution we reach from online GD. After a type $j$ customer gets assigned a product $i$, they will accept or decline the item based on their buying preference. Based on this new outcome, we update the entry $P_{ij}$ to more accurately reflect this customer's preference. We expect that as more customers arrive and get assigned to different types of products, our preference matrix $P$ will eventually converge to the ground truth matrix $P^\ast$. In the experiment with 100,000 customers, we perform the UCB algorithm for the first 20,000 incoming customers and rely on the solution from online GD afterwards. \Cref{fig:p_diff} shows the convergence of $P$ under this setting, in which we can see that $P$ hits the convergence horizon, approaching $P^\ast$ in the first 5,000 customer arrivals. After we stop applying the UCB algorithm, the value of $||P-P^\ast||_F$ remains stable because the customer preference that we have learned closely matches the actual purchasing behavior. We additionally note that $P$ no longer approaches $P^\ast$ quickly after the first few thousand arrivals. This is because after the application of the UCB algorithm in the beginning, the algorithm develops a good understanding of which customers have higher probabilities of purchasing certain products, thus avoiding matching those customers to products they are unlikely to buy. Therefore, it is difficult for the customer preference for those products to approach extreme accuracy. However, since most entries of $P$ and $P^\ast$ are sufficiently close, the remaining inaccuracy will not prevent the algorithm from making the optimal product allocation, and the regret introduced is minimal.

\begin{table}[ht]
\begin{center}
\renewcommand{\arraystretch}{1.6}
\setlength\tabcolsep{5pt}
\begin{tabular}{|| c || c | c | c | c ||}
 \hline
 \multirow{2}{*}{\makecell{Number of\\Customers}} & 
 \multirow{2}{*}{\makecell{Offline Dual Optimal\\Objective Value}} & \multirow{2}{*}{\makecell{Online Dual\\Objective Value}} & 
 \multirow{2}{*}{\makecell{Average\\Regret}} & 
 \multirow{2}{*}{Runtime} 
 \\ &&&& \\ \hline
 1,000 & 324.93 & 861.18 &  0.536 & 0.7s \\ \hline
 10,000 & 3251.91 & 4671.50 &  0.142 & 7.5s \\ \hline
 100,000 & 32480.30 & 29947.76 & 0.051 & 120s \\ \hline
 1,000,000 & 325171.14 & 295788.51 & 0.032 & 300s \\\hline
\end{tabular}
\caption{Results of online stationary experiments. We obtain the optimal offline dual objective by applying the first-order method $\mathrm{GD}^\mathrm{lin}$ \cite{allen2018lingering}.}
\label{tbl:experiment_stationary}
\end{center}
\end{table}

In \Cref{tbl:experiment_stationary}, we record the average regret and runtime obtained by \Cref{alg:integratedalg}. We can clearly see that the average regret decreases as the size of data gets larger. While the performance of the integrated algorithm is far from optimal in the first 1,000 customer arrivals, the dual variable already converges to the near-optimal solution when the size of the customer reaches 100,000, thus leading to a much smaller average regret. In each experiment, we choose to apply UCB and online GD enough times such that the regret will no longer exhibit drastic drops. We observe that in the experiment with 100,000 customer arrivals, after applying 20,000 rounds of UCB and 40,000 rounds of online GD, the dual variable already converges and requires no further computation. Therefore, we expect the runtime of the algorithm to remain at a considerably small value, as shown in the last column. 

\begin{figure*}[htbp]
\centering
\subfloat[Item selection by the greedy algorithm.]{%
  \includegraphics[width=0.45\textwidth]{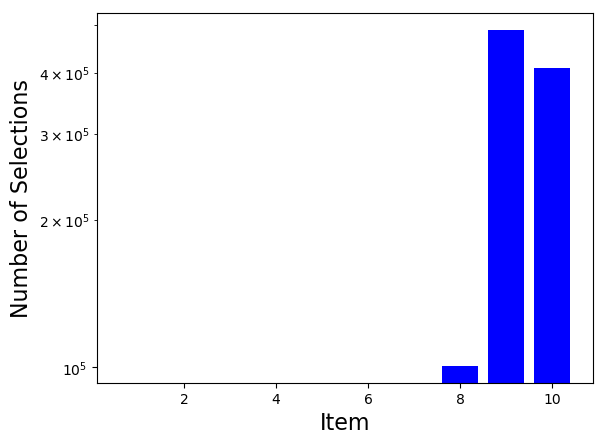}%
}\qquad
\subfloat[Item selection by \Cref{alg:integratedalg}.]{%
  \includegraphics[width=0.45\textwidth]{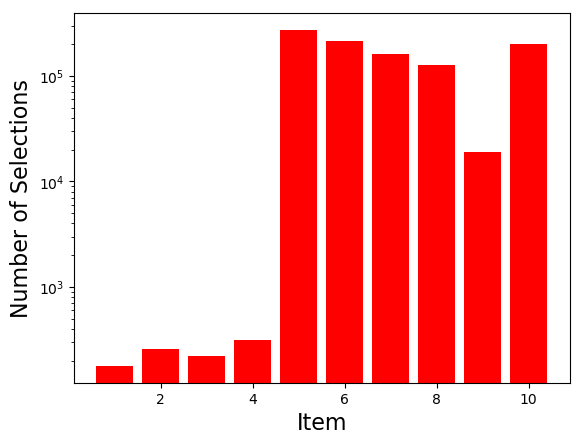}%
}
\caption{Number of times each product is selected.}
\label{fig:item_selection_comparison}
\end{figure*}

We have compared the results of \Cref{alg:integratedalg} with those of the greedy heuristic by directly comparing the revenue generated from the two approaches. In the greedy heuristic, each incoming customer is shown the product available with the highest reward until that product is fully consumed. In \Cref{fig:item_selection_comparison}, we show the number of times that each product is presented to customers by the greedy algorithm and the integrated algorithm, respectively. We can tell that the integrated algorithm is not greedy since it does not select an item solely based on its reward value. However, each customer might have a different preference for the product with highest reward, so intuitively we expect this approach to be somewhat naive and not necessarily to lead to the optimal outcome. 

\begin{table}[ht]
\begin{center}
\renewcommand{\arraystretch}{1.6}
\setlength\tabcolsep{5pt}
\begin{tabular}{|| c || c | c | c ||}
 \hline
 \multirow{2}{*}{\makecell{Number of \\ Customers}} & 
 \multirow{2}{*}{\makecell{Offline\\Revenue}} & \multirow{2}{*}{\makecell{Greedy Algorithm\\Revenue}} & 
 \multirow{2}{*}{\makecell{Integrated Algorithm\\Revenue}}
 \\ &&& \\ \hline
 1,000 & 324.93 & 198.00 & 154.00 \\  \hline
 10,000 & 3251.81 & 2024.80 & 2841.40 \\  \hline
 100,000 & 32413.36 & 20365.60 & 31438.10 \\  \hline
 1,000,000 & 324240.92 & 203584.80 & 315435.10 \\ \hline
\end{tabular}
\caption{Revenues generated with the offline approach, the greedy algorithm and \Cref{alg:integratedalg}.}
\label{tbl:compare_with_greedy}
\end{center}
\end{table}

This disparity in customer preferences is also indicated by our experiment results, which are shown in \Cref{tbl:compare_with_greedy}. The first column records the optimal revenue that we can achieve if we are to solve the corresponding offline problem. When the number of customer arrivals is small (e.g., 1,000), the greedy approach gives a higher revenue than the integrated algorithm; this is because the integrated algorithm has not gone through a sufficient number of online GD iterations for the dual variable to converge. As the sizes of data later increases, we observe that the revenue generated by the integrated algorithm is closer to the optimal revenue achieved in the offline problem and also exceeds that of the greedy heuristics. We additionally note that there can be cases where the greedy heuristics might give better performance. For instance, when the customer preference for each product are close to each other, choosing the product with the highest reward is essentially the optimal solution. However, since the customer preference in realistic settings tend to have more variance, the integrated algorithm would almost always allocate the better product.

\subsection{Online Non-Stationary Experiments}

We now move on to test the performance of the proposed online non-stationary algorithm, which combines \Cref{alg:integratedalg} and \Cref{alg:time}. Recall that we do this by converting the non-stationary problem into a series of stationary problems and then solving each of the stationary problems accordingly. In this subsection, we present two representative experiments, each having initial set-ups that make the problem non-trivial: the first experiment comes with extreme budget constraints, while the second is closer to a realistic setting, where each product comes with diverse reward.

\subsubsection{Experiments with Extreme Budget Constraints}
In the first set of experiments for the non-stationary problem, the following set up is considered:
\begin{itemize}
\item There are $m = 10$ types of customers and $d = 10$ products to be assigned. 
\item Each type of customer is associated with an arrival rate function that changes fairly slowly. \Cref{fig:rate_functions} shows some example rate functions that we consider.
\item We draw the ground truth customer preference matrix $P^\ast$ randomly from a Gaussian distribution centered around 0.1. 
\item One product has infinite budget, two have small budgets (10\% of the population) and the rest have minimal budgets (1\% of the population).
\item The reward of each product is set to be uniformly 1.
\end{itemize}

\begin{figure}[htbp]
\centering
\includegraphics[width=0.9\textwidth]{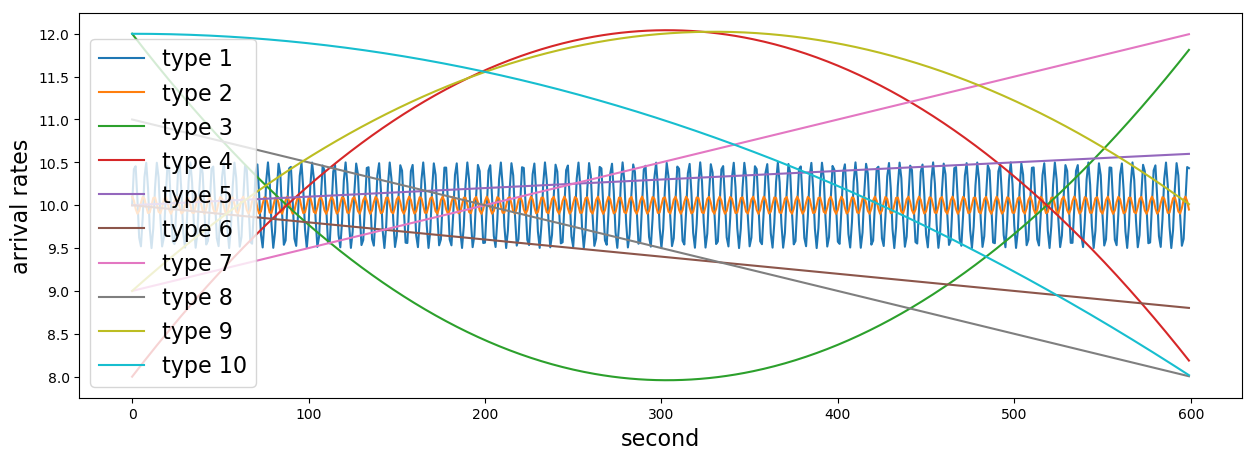}
\caption[Rate Functions used in the experiment with 60,000 customer arrivals]{The rate functions $\lambda_j(t)$ of each type of customer used in the experiment with 60,000 customer arrivals. There are two trig functions, four linear functions and four quadratic functions.}
\label{fig:rate_functions}
\end{figure} 

\begin{figure*}[htbp]
\centering
\subfloat[Customers Type 4.]{%
  \includegraphics[width=0.45\textwidth]{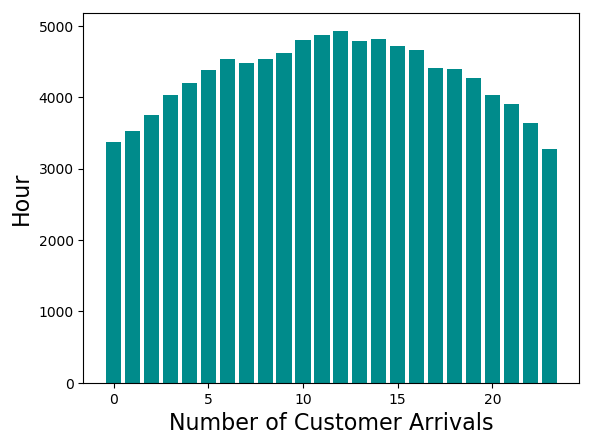}%
}\qquad
\subfloat[Customer Type 9.]{%
  \includegraphics[width=0.45\textwidth]{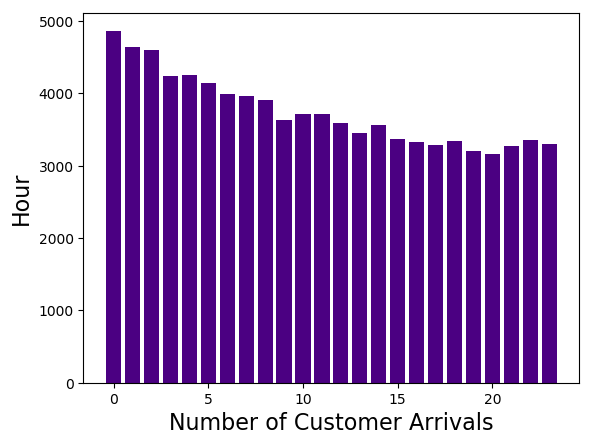}%
}
\caption{Number of customer arrivals each hour in the 1M experiment.}
\label{fig:arrivals}
\end{figure*}

We have performed experiments using three different sizes of data, which includes: (1) a population of 6,000 people arriving in an hour, (2) a population of 60,000 people arriving in 10 hours, and (3) a population of 1,000,000 people arriving in 24 hours. We have scaled the rate functions in accordance with the length of the time span to keep the experimental set-ups consistent. In \Cref{fig:arrivals}, we plot the number of customer arrivals each hour in the experiment with 1,000,000 customers for two different types of customers. We can clearly observe that the numbers vary with the hours, and meanwhile, different types of customers have different arrival patterns. 

In \Cref{fig:items_extreme}, we plot the number of each product assigned to the customers in the experiment with 1,000,000 customer arrivals. The result is as expected: all the products with minimal or large budgets have been sold to customers who have higher preference for those products, and the only product that has remaining budget is the one with infinite budget. As before, we not only care about the assignment of items, but also how close our dual variable is to the optimal solution. We thus move on to compute the online dual objective and evaluate its performance via average regret.

\begin{figure}[ht]
\centering
\includegraphics[width=0.9\textwidth]{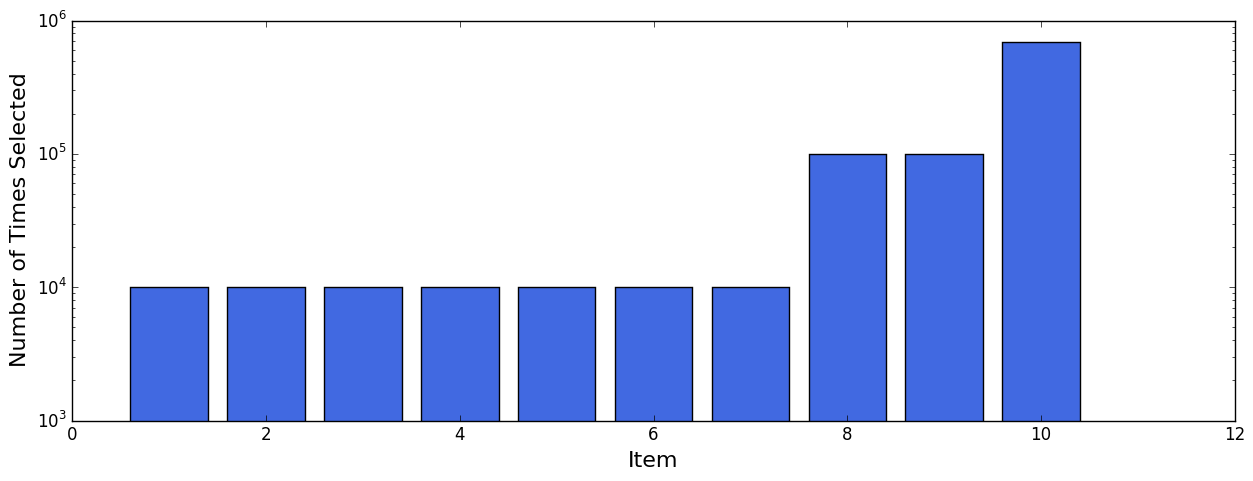}
\caption[Assignment with extreme budget constraints]{The item assignment of the 1M experiment with extreme budget constraints. Items are sorted in ascending order by values of their budget constraints.}
\label{fig:items_extreme}
\end{figure} 

\Cref{tbl:experiment1} records the results of the experiments across all three different sizes of data. We have computed the optimal offline dual objective mentioned in \Cref{subsec:offline}. We computed the online dual objective using \eqref{eq:solvefun}, and the total regret using \Cref{reg}. We observe the decrease of average regret as we increase the size of data, which confirms the theoretical result in \cref{theorem:non_stationary_regret}. When applying the non-stationary algorithm, we set the number of rounds of UCB and online GD we wish to apply inside each time segment, which mainly determines the runtime of the algorithm. Here, as we make the size of the data larger, we increase the number of rounds of UCB and gradient computations accordingly. The runtime hence scales up roughly linearly. 

\begin{table}[htbp!]
\begin{center}
\renewcommand{\arraystretch}{1.6}
\setlength\tabcolsep{5pt}
\begin{tabular}{|| c || c | c | c | c ||}
 \hline
 \multirow{2}{*}{\makecell{Number of\\Customers}} & 
 \multirow{2}{*}{\makecell{Offline Dual Optimal\\Objective Value}} & \multirow{2}{*}{\makecell{Online Dual\\Objective Value}} & 
 \multirow{2}{*}{\makecell{Average\\Regret}} & 
 \multirow{2}{*}{Runtime} 
 \\ &&&& \\ \hline
 6,000 & 612.54 & 900.58 & 0.0454 & 5s \\ \hline
 60,000 & 6222.43 & 6843.15 & 0.0101 & 40s \\ \hline
 1,000,000 & 98547.03 & 105854.78 & 0.0076 & 940s \\ \hline
\end{tabular}
\caption[Extreme budget constraints non-stationary experiment]{Results of experiment with extreme budget constraints. We obtain the optimal offline dual objective by applying $\mathrm{GD}^\mathrm{lin}$. }
\label{tbl:experiment1}
\end{center}
\end{table}

\subsubsection{Experiments with Varying Rewards}
We have also performed another experiment with varying reward values such that the assignment of the items are not solely based on their budget constraints and customer preference, which reflects a more realistic setting. The set-ups of this experiment remain the same as the previous experiment, with the following exceptions:
\begin{itemize}
\item Instead of applying the extreme budgets constraints as before, we select one product to have fairly large budget (66.67\% of the population) and let the rest of the products have fairly small budgets (10\% of the population).
\item The rewards of products vary from 0.2 to 1. In particular,the product with the most budgets is associated with a reward value of 0.2, so this product should be the least favorable one to most customers. 
\end{itemize}

\begin{figure}[htbp]
\centering
\includegraphics[width=0.9\textwidth]{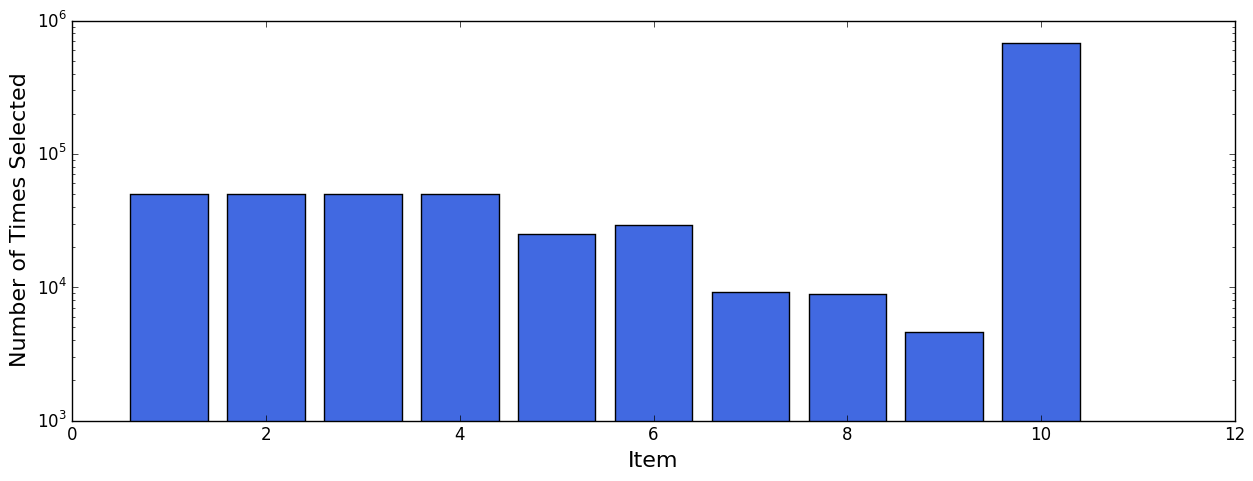}
\caption[Assignment with varying rewards]{The item assignment of the experiment with 1,000,000 customer arrivals and varying rewards. Items are sorted in descending order by values of their rewards.}
\label{fig:items_varying}
\end{figure}  

The result of the experiment with 1,000,000 customer arrivals can be seen in \Cref{fig:items_varying}. Observe that our algorithm ensures that the four items with highest rewards have been fully sold out, while the rest still have remaining budgets in the end. It is noteworthy that even the items with low rewards have been sold to some extent, due to the exploration component of the UCB algorithm. Overall, such a solution matches our expectation of a near-optimal solution. 

We have again compared the results across three different sizes of data, which can be seen in \Cref{tbl:experiment2}. As before, we can observe the average regret decreasing as the number of customer arrivals increase. The runtime is similar to the first set of experiments, which are reasonable in terms of the size of data. Overall, our experiments have shown that the non-stationary algorithm has a good potential of being applied to real-world online product allocation problems. 

\begin{table}[htbp]
\begin{center}
\renewcommand{\arraystretch}{1.6}
\setlength\tabcolsep{5pt}
\begin{tabular}{|| c || c | c | c | c ||}
 \hline
 \multirow{2}{*}{\makecell{Number of\\Customers}} & 
 \multirow{2}{*}{\makecell{Offline Dual Optimal\\Objective Value}} & \multirow{2}{*}{\makecell{Online Dual\\Objective Value}} & 
 \multirow{2}{*}{\makecell{Average\\Regret}} & 
 \multirow{2}{*}{Runtime} 
 \\ &&&& \\ \hline
 6,000 &  608.97  & 549.35 & 0.0199 & 4s \\ \hline
 60,000 & 6217.12  & 5539.41 &  0.0114 & 36s \\ \hline
 1,000,000 & 97060.83 & 95482.46 & 0.0025 & 957s \\ \hline
\end{tabular}
\caption[Non-stationary experiment with varying rewards]{Results of experiment with varying rewards. }
\label{tbl:experiment2}
\end{center}
\end{table}

\section{Conclusion}
\label{sec:Conclusion}
In this work, we propose algorithms that tackle the online resource allocation problem, in which we aim to recommend each customer with an item in ways that not only maximize potential reward, but also satisfy budget constraints. In order to find the optimal solution to our online objective function, we first must learn the preferences, or $P_{ij}$, for each customer type. To learn the probability that a customer in a certain type purchases a given item, we use the Upper Confidence Bound (UCB) algorithm, which decides which item to recommend to a customer. When the customer arrives, we observe whether or not they have purchased the item recommended to them and update our customer preference variable, $P_{ij}$. We incorporate this value into our objective function, and apply online Gradient Descent to minimize our dual function. Over time, as more customers arrive, the estimations for the $P_{ij}$ values become more accurate, and the UCB algorithm is able to make better recommendations, ones that have a higher probability of reward. Overall, our online stationary algorithm combines reinforcement learning with online optimization to minimize our dual function and find the optimal solution. Our tests on this novel algorithm have supported our theory that regret of this algorithm approaches zero when the number of customers is sufficiently large, and that our algorithm produces a better solution than greedy heuristics. 

Although our online stationary algorithm performs well, customers do not always arrive following a stationary Poisson process. In a more realistic scenario, the rate at which customers arrive varies over time. This motivates us to consider the online non-stationary problem, we consider customers arriving onto the webpage following a non-stationary Poisson process. When we remove the assumption of customer arrival following a stationary Poisson processes, we are met with complexity in formulating the online LP, as this type of problem cannot simply be solved using the proposed online stationary algorithm. An additional difficulty lies in that if we do online learning for $P_{ij}$, the dual variables will not necessarily converge. Moreover, if the arrival processes are modeled as non-stationary Poisson processes, then the probability that the new arriving customer comes from type $j$ is almost impossible to compute. Our non-stationary algorithm approaches these difficulties by dividing the non-stationary problem into several stationary problem, under the assumption that the arrival rate functions of different customers change fairly slowly in a small time segment. We have shown theoretically that the regret of the online non-stationary algorithm should approach a small value near zero when the number of customers is sufficiently large. Our empirical results with both extreme budget constraints and non-trivial budget constraints also support this theoretical result.

\section{Future Work}
\label{sec:FutureWork}

There are many rich, exciting directions that one can pursue with this work. In the product recommendation model we propose above, we have considered a rather simplified scenario, matching each customer with one product at a time and aiming to maximize the expected profit brought by this assignment. However, one can in fact make the current model more realistic by introducing more complications: (1) When each customer arrives, an e-commerce platform can in fact display a set of products to the customer at the same time. (2) The user engagement that a website wishes to maximize is not necessarily the reward values, but the number of clicks or the dwell time that a user spend on the webpages. (3) Sometimes there are more business contraints to consider, e.g., one needs to guarantee a fixed number of selections for a certain product. A potential future direction of this work is to take the additional settings above into the model construction, and develop variants of the proposed algorithms that can deal with these more complicated situations.

In addition, improvements can also be made towards the performance of the proposed online stationary and non-stationary algorithms. While the runtime of the algorithms are reasonable considering the large scale of the data, one might further decrease the runtime of these algorithms with the application of parallel computing. This would enable our algorithms to have the potential of being applied in real-world settings, where e-commerce companies oftentimes need to deal with even larger scale of customer arrivals in a shorter period of time, e.g., a million customer arrivals within a second. Throughout our analysis of algorithm performance, we have only tested our algorithms with synthetic data; therefore, we are also interested in understanding how they perform when dealing with real-world datasets.

\appendix
\section{Notations} \label{app:Notations}
\Cref{tbl:TableOfNotation} records symbols used throughout the paper.

\begin{table}[htbp]
\begin{center}
\begin{tabular}{r |c p{9cm}}
$n$ &  & Number of items \\
$m$ &  & Number of customer types \\
$i$ &  & Item indices, $i \in [1,n]$\\
$j$ &  & Customer type indices , $j \in [1,m]$\\
$r_i$ &  & Reward in terms of revenue for the company for a given customer buying certain item $i$ \\
$b_i$ && Budget constraint of item $i$, $b \in \mathbb{R}^{n}$\\
$x_{ij}$ &  & Probability that a customer of type $j$ gets recommended item $i$ \\  
$P_{ij}$ & & Probability that a customer of type $j$ will buy item $i$ given that they are offered item $i$\\
$\overline{P_j}$ && For a given customer of type $j$, his highest possibility of buying any particular product, i.e., $\overline{P_j} = \max_{i}P_{ij}$\\
$\mu$ & & Regularization parameter \\
$\Lambda$ && Dual variable vector of dimension $n$ \\
$\eta_t$ && Step size in optimization algorithm at an iteration\\

\end{tabular}
\end{center}
\caption{Table of Notations}
\label{tbl:TableOfNotation} 
\end{table}

\section*{Acknowledgments}
This project was completed during the Research in Industrial Projects for Students (RIPS) 2019, under the sponsorship of the Institute for Pure and Applied Math (IPAM) at UCLA and the Alibaba Group. We would like to thank our academic and industry mentors Anna Ma, Xinshang Wang, and Wotao Yin for their help discussions. We would also like to thank Susana Serna, Dima Shlyakhtenko and all of the IPAM staff who made RIPS 2019 possible. 

\bibliographystyle{siamplain}
\bibliography{references}

\end{document}